\theoremstyle{plain}
\newtheorem{theorem}{Theorem}[section]
\newtheorem{definition}[theorem]{Definition}
\begin{document}

\title{On finitary Hindman numbers}
\author{Shahram Mohsenipour}
\author{Saharon Shelah}
\address{School of Mathematics, Institute for Research in Fundamental Sciences (IPM)
        P. O. Box 19395-5746, Tehran, Iran}\email{sh.mohsenipour@gmail.com}

\address{The Hebrew University of Jerusalem, Einstein Institute of Mathematics,
Edmond J. Safra Campus, Givat Ram, Jerusalem 91904, Israel}

\address{Department of Mathematics, Hill Center - Busch Campus, Rutgers, The State
University of New Jersey, 110 Frelinghuysen Road, Piscataway, NJ 08854-8019,
USA}\email{shelah@math.huji.ac.il}

\thanks{The research of the first author was in part supported by a grant from IPM (No. 97030403).  The research of the second author was partially supported by European Research Council grant 338821.  This is paper 1146 in Shelah's list of publications.}

\subjclass [2010] {03D10}
\keywords{Ramsey Theory}
\begin{abstract} Spencer asked whether the Paris-Harrington version of the Folkman-Sanders theorem has primitive recursive upper bounds. We give a positive answer to this question.
\end{abstract}
\maketitle
\bibliographystyle{amsplain}
\section{Introduction}
Inspired by Paris-Harrington's strengthening of the finite Ramsey theorem \cite{ph}, Spencer defined in a similar way the following numbers (which we denote by $\textrm{Sp}(m,c)$), strengthening the Folkman-Sanders theorem \cite{soifer} \!\!\footnote{According to Soifer, this should be called the Arnautov-Folkman-Sanders theorem. See \cite{soifer}, pp. 305.}. Let $\textrm{Sp}(m,c)$ be the least integer $k$ such that whenever $[k]=\{1,\dots,k\}$ is $c$-colored then there is $H=\{a_0,\dots,a_{l-1}\}\subset[k]$ such that $\sum H$ (sums of elements of $H$ with no repetition) is monochromatic and $m\leq\min H\leq l$. As in the case of Paris-Harrington's theorem which is deduced from the infinite Ramsey theorem, the existence of the Spencer numbers $\textrm{Sp}(m,c)$ is also easily deduced from the infinite version of the Folkman-Sanders theorem, namely Hindman's theorem \cite{hindman}. Spencer asked whether $\textrm{Sp}(m,c)$ is primitive recursive\footnote{Spencer asked Shelah the question during the workshop: {\bf Combinatorics: Challenges and Applications}, celebrating Noga Alon's 60th birthday, Tel Aviv University, January 17-21, 2016.}. In this paper we give a positive answer to this question. In fact we define the more general numbers $\textrm{Sp}(m,p,c)$ and show that it is in $\mathcal{E}_{5}$ of the Grzegorczyk hierarchy of primitive recursive functions. This means that the rate of the growth of the Spencer function is much slower than the Paris-Harrington function which grows faster than every primitive recursive function. We refer the reader to Section 2.7. of \cite{grs} for getting information about the growth rate of the functions in class $\mathcal{E}_{5}$ which are called WOW functions there. It contains sufficient information to be convinced why our proof implies that the function $\textrm{Sp}(m,p,c)$ is in class $\mathcal{E}_{5}$. We also refer the reader to \cite{erdosmills} for some Ackermannian bounds in both directions for the Paris-Harrington numbers.
\begin{definition}\label{spencer}
For positive integers $m,p,c$, let $\textrm{Sp}(m,p,c)$ be the least integer $k$ such that whenever $[k]=\{1,\dots,k\}$ is $c$-colored then there is $H=\{a_0,\dots,a_{l-1}\}\subset[k]$ $($with $a_0<\dots<a_{l-1}$$)$ such that
\begin{itemize}
\item[(i)] $\sum H$ is monochromatic,
\item[(ii)] $m\leq a_0$, $p\leq l$ and $a_{p-1}\leq l$.
\end{itemize}
\end{definition}
To prove our theorem we use the bounds given in \cite{taylor} for the numbers $\textrm{U}(n,c)$ for the disjoint unions theorem. We also need to consider the finitary Hindman numbers $\textrm{Hind}(n,c)$ defined below. Let's first fix some notations. Let $A,B$ be finite subsets of $\mathbb{N}$, by $A<B$ we mean $\max A<\min B$. If $T$ is a collection of pairwise disjoint sets, then $NU(T)$ will denote the set of non-empty unions of elements $T$. Also by $T=\{A_0,\dots,A_{l-1}\}_{<}$ we mean that the elements of $T$ are finite non-empty subsets of $\mathbb{N}$ and $A_0<\dots<A_{l-1}$. We also need the following notation. Let $A=\{a_0,\dots,a_n\}$ be a finite subset of $\mathbb{N}$. Let $\textrm{exp}_2(A)$ denote $2^{a_0}+\dots+2^{a_n}$. We will use the simple fact that if $A,B$ are two nonempty disjoint finite subsets of $\mathbb{N}$, then $\textrm{exp}_2(A\cup B)=\textrm{exp}_2(A)+\textrm{exp}_2(B)$. Also we have $A\neq B$ iff $\textrm{exp}_2(A)\neq\textrm{exp}_2(B)$. We denote the collection of nonempty subsets of $S$ by $\mathcal{P}^{+}(S)$.

\begin{definition}
For positive integers $n,c$, let $\textrm{U}\,(n,c)$ be the least integer $k$ with the following property. For any disjoint sets $A_0,\dots,A_{k-1}$, if  $\textrm{NU}\,\{A_0,$ $\dots,A_{k-1}\}$ is $c$-colored, then there are disjoint sets $d_0,\dots,d_{n-1}$ such that
\begin{itemize}
\item[(i)] $d_i\in \textrm{NU}\,\{A_0,\dots,A_{k-1}\}$ for $i=0,\dots,n-1$,
\item[(ii)] $\textrm{NU}\,\{d_0,\dots,d_{n-1}\}$ is monochromatic.
\end{itemize}
\end{definition}

\begin{theorem}[Taylor, \cite{taylor}]
$\textrm{U}(n,c)$ is a tower function.
\end{theorem}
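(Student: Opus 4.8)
Here "tower function" means that, for each fixed $c$, the number $\textrm{U}(n,c)$ lies between two towers of $2$'s whose heights are linear in $n$ (with implied constants depending on $c$); since only the upper estimate is needed for the application to $\textrm{Sp}$, I sketch it first. The plan starts with a reindexing: a $c$-colouring of $\textrm{NU}\{A_0,\dots,A_{k-1}\}$ is nothing but a $c$-colouring $\chi$ of $\mathcal{P}^{+}(\{0,\dots,k-1\})$, the index set $S$ standing for the union $\bigcup_{i\in S}A_i$ and disjointness of unions corresponding to disjointness of index sets. So it suffices to bound, as a tower in $n$, the least $k$ such that every $c$-colouring $\chi$ of $\mathcal{P}^{+}(\{0,\dots,k-1\})$ admits pairwise disjoint nonempty sets $S_0,\dots,S_{n-1}$ with $\chi\!\left(\bigcup_{j\in J}S_j\right)$ constant over all nonempty $J\subseteq\{0,\dots,n-1\}$. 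This is the finite unions theorem, which I would prove by induction on $n$, the case $n=1$ being trivial.

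For the inductive step I would run a \emph{finitary colour-focusing} argument --- the quantitative version of the usual inductive proof of the finite unions theorem. Given $k$ blocks with a colouring $\chi$, cut the index set $\{0,\dots,k-1\}$ into $q$ consecutive runs, each sufficiently long (roughly $2^{q}$ indices apiece, so $k$ is about $q\cdot 2^{q}$). Scanning the runs from left to right and applying the pigeonhole principle inside each one, I would extract a single "super-block" $B_i$ from each run, together with one further disjoint set $d$ and a colour $\gamma$, arranged so that $\chi(d)=\gamma$ and $\chi\!\left(d\cup\bigcup_{j\in J}B_j\right)=\gamma$ for \emph{every} nonempty $J\subseteq\{0,\dots,q-1\}$; that is, $d$ "absorbs" $\chi$. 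In order to later match colours I would actually produce not one but $c$ mutually disjoint absorbers $d^{(1)},\dots,d^{(c)}$, each with its own absorbing colour, and then refine the colouring of the $B_i$ so as also to record, for each $B_i$, its interaction with $d^{(1)},\dots,d^{(c)}$. Choosing $q=\textrm{U}(n-1,c')$ for the resulting colour count $c'$ and applying the inductive hypothesis to this refined colouring of $\textrm{NU}\{B_0,\dots,B_{q-1}\}$ yields disjoint $d_0,\dots,d_{n-2}$ among the $B_i$ whose nonempty unions are all one colour $\gamma'$ which, by the recorded interaction data, equals the absorbing colour of some $d^{(s)}$; then $d_0,\dots,d_{n-2},d^{(s)}$ is monochromatic on all its unions. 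Since $c'$ is only polynomially (or singly-exponentially) larger than $c$ and each level of the induction multiplies the required number of blocks by a single exponential, $n$ levels bound $\textrm{U}(n,c)$ by a tower of height $O(n)$.

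For the matching lower bound I would exhibit, whenever $k$ is just below a tower of height $\sim n$, a "bad" $c$-colouring built by a recursive, self-similar stepping-up, in the spirit of the lower bounds for the Graham--Rothschild numbers: $\{0,\dots,k-1\}$ is cut into consecutive intervals whose lengths themselves grow like a tower, the colour of a set $S$ records at each of $n$ nested scales a bounded amount of information about where $S$ lies (which interval carries its leading coordinates, then recursing inside that interval with $n-1$ in place of $n$), and a short induction on $n$ shows that any $n$ pairwise disjoint nonempty sets must have two of their $2^{n}-1$ unions receiving different colours. Hence $\textrm{U}(n,c)$ is also bounded below by a tower of height $\Omega(n)$, completing the proof.

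I expect the main obstacle to be the focusing step of the upper bound: arranging a \emph{single} set $d$ that simultaneously absorbs all of the exponentially many unions of the $q$ super-blocks genuinely requires a secondary iteration inside each level (first arrange absorption against one super-block, then bootstrap up the chain), and the whole thing --- this inner iteration, the colour-matching device, and the growth of both the block count and the colour count --- must be organised so that precisely one exponential is spent per outer level, as otherwise the height of the resulting tower is not linear in $n$. Checking this, and (separately) carrying out the self-similar lower-bound construction, are the delicate points; the rest is routine pigeonhole bookkeeping.
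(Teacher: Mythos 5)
This theorem is quoted from Taylor's paper; the present article contains no proof of it, and all that is actually used later is the \emph{upper} estimate (that $\textrm{U}(n,c)$ is bounded by a tower of height $O(n)$, so that the recursion defining the $n_i$ stays inside $\mathcal{E}_4$ at each step). So there is no argument in the paper to compare yours against; judged on its own, your proposal is a plan whose essential content is still missing. The reindexing of a colouring of $\textrm{NU}\{A_0,\dots,A_{k-1}\}$ as a colouring of $\mathcal{P}^{+}(\{0,\dots,k-1\})$ and the induction on $n$ are the correct and standard skeleton, but the whole theorem lives in the focusing lemma that you defer: producing, from a tower's worth of blocks, a single set $d$ that absorbs all $2^{q}-1$ unions of $q$ super-blocks requires an inner iteration in which the number of colours being pigeonholed is itself exponential in the number of blocks already committed, and one must verify that this inner loop costs only one exponential per outer level --- you name this as the delicate point but do not carry it out. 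There is also a concrete colour-matching gap in the absorber scheme as described: taking $c$ disjoint absorbers $d^{(1)},\dots,d^{(c)}$ does not guarantee that every colour occurs as an absorbing colour, so the colour $\gamma'$ returned by the inductive hypothesis need not equal the absorbing colour of any $d^{(s)}$, and ``recording the interaction'' of each $B_i$ with the $d^{(s)}$ does not by itself force agreement. The standard repair is different: prove the stronger focusing statement that the colour of $\bigcup_{j\in J}D_j$ depends only on $\min J$ for $c(n-1)+1$ blocks $D_1<\dots<D_{c(n-1)+1}$, then pigeonhole on the min-colour; the matching is then automatic because the minimum of a union of the selected blocks is one of the selected indices.

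The lower-bound half should be dropped. It is not needed --- ``tower function'' enters the paper only as an upper estimate --- and your sketch does not support it: recording ``a bounded amount of information at each of $n$ nested scales'' uses a number of colours that grows with $n$ rather than a fixed $c$, and the claim that any $n$ pairwise disjoint sets must then have two of their unions coloured differently is exactly the kind of assertion that fails for naive stepping-up colourings of union systems (a single interval of your partition may contain many of the $S_j$, collapsing the scale information). Tower-type lower bounds for disjoint-union Ramsey numbers do not follow from such a construction, and asserting one here is a genuine overreach.
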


\begin{definition}
For positive integers $n,c$, let $\textrm{Hind}\,(n,c)$ be the least integer $k$ such that whenever $\textrm{NU}\,\{A_0,$ $\dots,A_{k-1}\}_{<}$ is $c$-colored, then there is $\{d_0,\dots,d_{n-1}\}_{<}$ such that
\begin{itemize}
\item[(i)] $d_i\in \textrm{NU}\,\{A_0,\dots,A_{k-1}\}_{<}$ for $i=0,\dots,n-1$,
\item[(ii)] $\textrm{NU}\,\{d_0,\dots,d_{n-1}\}_{<}$ is monochromatic.
\end{itemize}
\end{definition}
It is also known that

\begin{theorem}[\cite{dodos}, Proposition 2.19.]
$\textrm{Hind}\,(n,c)$ lies in $\mathcal{E}_{4}$ of the Grzegorczyk hierarchy.
\end{theorem}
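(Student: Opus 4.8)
The first step is a reformulation. Via $\textrm{exp}_2$, a block sequence $A_0<\dots<A_{k-1}$ equipped with $\textrm{NU}_{<}$ is order--isomorphic to $\mathcal{P}^{+}(\{0,\dots,k-1\})$ under the block order ($S<S'$ iff $\max S<\min S'$), so $\textrm{Hind}(n,c)$ is the least $k$ such that every $c$--colouring $\chi$ of $\mathcal{P}^{+}(\{0,\dots,k-1\})$ admits block--ordered nonempty sets $S_0<\dots<S_{n-1}$ with $\chi$ constant on $\{\bigcup_{j\in T}S_j:\emptyset\neq T\subseteq\{0,\dots,n-1\}\}$. Thus the task is to bound this ``ordered finite unions'' number, and the whole subtlety is that the ordering is exactly what separates $\textrm{Hind}$ from the merely disjoint--unions number $\textrm{U}$.

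I would run a colour--focusing (``absorption'') induction producing $N:=c(n-1)+1$ blocks. Build block--ordered $d_0<\dots<d_{N-1}$ and colours $\gamma_0,\dots,\gamma_{N-1}$, one $d_m$ at a time from left to right, carrying a reservoir $R^{(m)}$ (a block sub--sequence lying after $d_{m-1}$) and the invariant that $\chi\bigl(\bigcup_{j\in T}d_j\cup V\bigr)=\gamma_{\min T}$ for every nonempty $T\subseteq\{0,\dots,m-1\}$ and every $V\in\textrm{NU}_{<}(R^{(m)})\cup\{\emptyset\}$. To pass from $m$ to $m+1$ one needs, inside $R^{(m)}$: a prefix--union $b$, a colour $\gamma_m$, and a block sub--sequence $R^{(m+1)}$ of $R^{(m)}$ lying after $b$ (of length dictated by the number of remaining rounds), such that $\chi(b\cup V')=\gamma_m$ for all $V'\in\textrm{NU}_{<}(R^{(m+1)})\cup\{\emptyset\}$; then set $d_m:=b$. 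The invariant survives by a short computation — split $T$ at its largest element, observe that $d_m\cup V'$ is again a block--union of $R^{(m)}$, and apply the old invariant. After $N$ rounds, taking $V=\emptyset$ gives $\chi\bigl(\bigcup_{j\in T}d_j\bigr)=\gamma_{\min T}$, and a pigeonhole on $\gamma_0,\dots,\gamma_{N-1}$ (some colour occurs $\geq n$ times, say at $i_0<\dots<i_{n-1}$) yields the genuinely monochromatic family $d_{i_0}<\dots<d_{i_{n-1}}$.

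Everything is thus reduced to the engine lemma used in the step: for every $c$--colouring $\chi$ of $\textrm{NU}_{<}$ of a long enough block sequence there is a prefix--union $b$, a colour $\gamma$, and a block sub--sequence $R$ after $b$ of prescribed length with $\chi(b\cup V)=\gamma$ for all $V\in\textrm{NU}_{<}(R)\cup\{\emptyset\}$. I would prove this lemma by its own induction on the target length of $R$, the key being to carry out the extraction so that $R$ stays a genuine interval sub--sequence; this is a block--sequence--aware refinement of the argument behind Taylor's bound for $\textrm{U}(n,c)$. Feeding the resulting estimates back through the outer induction (and keeping the colour parameters that occur elementary in $n$ and $c$), one tracks the chain of nested tower--type bounds through the Grzegorczyk hierarchy and lands the bound for $\textrm{Hind}(n,c)$ in $\mathcal{E}_4$.

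The main obstacle is precisely the orderedness, and it bites at the engine lemma. One cannot just quote $\textrm{U}(n,c)$: applying it to the $A_i$'s returns pairwise disjoint absorbing sets, but their index sets can be nested or crossing (e.g.\ with strictly nested convex hulls), and then no two of their fusions are block--ordered at all, so the reservoir cannot be re--sorted into a block sequence. Hence the rigid ``prefix $b$, block sub--sequence $R$ after $b$'' shape of the engine lemma, which forces one to keep the interval structure alive at every focusing step at the cost of an extra Ramsey argument per round. Making this structural constraint coexist with the colour bookkeeping, so that the recursion stays at the $\mathcal{E}_4$ level and does not climb to the $\mathcal{E}_5$ (``WOW'') level at which the Spencer numbers themselves live, is the delicate heart of the proof. (An alternative route of comparable cost is to deduce the ordered finite--unions statement from the Hales--Jewett theorem and to invoke Shelah's primitive recursive bounds, whose growth rate again sits in $\mathcal{E}_4$.)
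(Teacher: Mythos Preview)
The paper does not supply a proof of this theorem at all: it is simply quoted from the literature (the cited Proposition~2.19 of~\cite{dodos}) and used as a black box in the main argument. There is therefore no ``paper's own proof'' to compare your proposal against.

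That said, your outline is a reasonable sketch of how such a bound is obtained. The colour-focusing/absorption scheme you describe is the standard direct route to the ordered finite-unions theorem, and your identification of the real issue---that the disjoint-unions bound $\textrm{U}(n,c)$ cannot be quoted directly because it returns sets whose index supports need not be block-ordered---is exactly right. Carried out with care, the nested inductions you describe produce bounds of tower type in $n$ and $c$, which lands the function in $\mathcal{E}_4$. The alternative you mention at the end, deducing the ordered finite-unions statement from the Hales--Jewett theorem together with Shelah's primitive recursive bounds, is in fact closer to the argument appearing in the cited source and is the more economical way to pin down the $\mathcal{E}_4$ estimate without tracking the recursion by hand. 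Either route is acceptable; you should just be aware that what you are comparing yourself to is an external reference, not anything proved in the present paper.
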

\section{Spencer Numbers}
Let $m,p,c$ be positive integers and let $k_{*}=\textrm{Hind}(p+1,c)$. We inductively define a sequence of positive integers $\langle n_{i}; i<k_{*}+1\rangle$ as follows.
\begin{itemize}
\item[(i)] $n_0$ is the least integer with $m\leq 2^{n_0}$,
\item[(ii)] $m_i=2^{\sum_{j=0}^{i}n_{j}}$,
\item[(iii)] $\alpha_{i}=2^{k_{*}-i-1+\sum_{j=1}^{i}n_j}$,
\item[(iv)] $n_{i+1}=\textrm{U}(m_{i},c^{\alpha_{i}})$.
\end{itemize}
\begin{theorem}
For all positive integers $m,p,c$ we have $\textrm{Sp}(m,p,c)\leq 2^{n_{k_{*}}}$.
\end{theorem}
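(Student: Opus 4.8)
The plan is threefold: (1) transfer the problem from colourings of the interval $[2^{n_{k_*}}]$ to colourings of the non-empty subsets of $\{0,1,\dots,n_{k_*}-1\}$ via the map $\textrm{exp}_2$; (2) reduce the theorem to the existence of a single block sequence whose family of non-empty unions is monochromatic and whose first $p$ blocks are suitably small; and (3) build that block sequence in $k_*$ stages by iterating Taylor's disjoint-unions theorem along the recursion $(\mathrm{i})$–$(\mathrm{iv})$, with the finitary Hindman theorem entering through the equality $k_*=\textrm{Hind}(p+1,c)$.

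For (1) and (2): put $N:=n_{k_*}$, $k:=2^{N}$, fix a $c$-colouring $\chi$ of $[k]$, and define a $c$-colouring of $\mathcal{P}^{+}(\{0,\dots,N-1\})$, still written $\chi$, by $\chi(S):=\chi(\textrm{exp}_2(S))$; this makes sense since $1\le\textrm{exp}_2(S)\le 2^{N}-1<k$. Because $\textrm{exp}_2$ is additive on disjoint sets and $A<B$ forces $\textrm{exp}_2(A)<\textrm{exp}_2(B)$, it suffices to produce pairwise disjoint non-empty sets $d_0<\dots<d_{l-1}\subseteq\{n_0,n_0+1,\dots,N-1\}$ with $\textrm{NU}\{d_0,\dots,d_{l-1}\}$ $\chi$-monochromatic, $l\ge p$, and $\textrm{exp}_2(d_{p-1})\le l$: taking $H:=\{\textrm{exp}_2(d_0),\dots,\textrm{exp}_2(d_{l-1})\}$, every element of $\sum H$ is $\textrm{exp}_2$ of a member of $\textrm{NU}\{d_0,\dots,d_{l-1}\}$, so $\sum H$ is monochromatic, while $m\le 2^{n_0}\le\textrm{exp}_2(d_0)=\min H$, $p\le l=|H|$, and $a_{p-1}=\textrm{exp}_2(d_{p-1})\le l$, exactly as Definition~\ref{spencer} requires.

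For (3): I would build the block sequence in $k_*$ stages, the $i$-th stage being an application of Taylor's theorem with target $m_i$ and $c^{\alpha_i}$ colours, so that the number of base blocks needed there is precisely $n_{i+1}=\textrm{U}(m_i,c^{\alpha_i})$ as in $(\mathrm{iv})$. The device that makes this work is the inflated colour count: at stage $i$ one colours a block not by its $\chi$-colour alone but by the list of $\chi$-colours of its unions with each of the $2^{(k_*-i-1)+(n_1+\dots+n_i)}=\alpha_i$ patterns formed from the atoms already fixed in stages $\le i$ together with a choice among the $k_*-i-1$ stages still to come; monochromaticity for this refined colouring at every stage is what ultimately forces $\chi$-monochromaticity of \emph{all} unions of the final family, including those that mix blocks produced at different stages. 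One keeps the fixed blocks $<$-ordered and inside the window $\{n_0,\dots,n_0+(n_1+\dots+n_{k_*-1})-1\}$, so that the $\textrm{exp}_2$-value of each of them lies in $[m,m_{k_*-1})$; and the role of the finitary Hindman theorem, via $k_*=\textrm{Hind}(p+1,c)$, is to guarantee that $k_*$ stages already suffice to deliver a $<$-ordered family of length $l\ge m_{k_*-1}$ whose first $p$ members are small fixed blocks of this kind.

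Granting the construction, the reduction closes the argument: $\textrm{NU}\{d_0,\dots,d_{l-1}\}$ is $\chi$-monochromatic, $l\ge m_{k_*-1}\ge 2^{\,n_0+k_*-1}\ge p$ (using $n_1,\dots,n_{k_*-1}\ge 1$ and $k_*\ge p+1$), and $\textrm{exp}_2(d_{p-1})<m_{k_*-1}\le l$, so $\textrm{Sp}(m,p,c)\le 2^{n_{k_*}}$. I expect the real difficulty to be entirely inside stage $i$: pinning down the refined $c^{\alpha_i}$-colourings carefully enough that monochromaticity for them propagates to $\chi$-monochromaticity of every mixed union, and checking that the $<$-order of the extracted blocks survives each use of Taylor's theorem — which is why the \emph{ordered} Hindman numbers, and not merely the unordered $\textrm{U}$, are needed. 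The remaining bound, that $\textrm{Sp}(m,p,c)$ lies in $\mathcal{E}_{5}$, is then routine, since $\textrm{U}(n,c)$ is a tower by Taylor's theorem, $\textrm{Hind}(p+1,c)\in\mathcal{E}_{4}$, and $2^{n_{k_*}}$ is obtained from these by iterating a tower $k_*$ times.
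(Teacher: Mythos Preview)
Your overall plan matches the paper: transfer to set colourings via $\textrm{exp}_2$, iterate Taylor's theorem with refined $c^{\alpha_i}$-colourings across $k_*$ intervals $S_0<\dots<S_{k_*-1}$ to extract families $\{w_{i,s}:s<m_i\}$, then apply finitary Hindman to the representatives $\{w_{0,0},\dots,w_{k_*-1,0}\}_<$. Two load-bearing pieces are off, though. First, the stage induction has to be \emph{reverse} (from $i=k_*-1$ down to $0$). The factor $2^{k_*-i-1}$ in $\alpha_i$ counts subsets $B\subseteq\{i+1,\dots,k_*-1\}$ and stands for the already-chosen set $\bigcup_{j\in B}w_{j,0}$ above, while the factor $2^{n_1+\dots+n_i}$ counts \emph{all} subsets of $\bigcup_{j<i}S_j$ below (nothing is ``fixed'' there). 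If the later stages were genuinely ``still to come'' you could not name any $w_{j,0}$, and ranging instead over all subsets of $\bigcup_{j>i}S_j$ would replace $k_*-i-1$ by $n_{i+2}+\dots+n_{k_*}$, destroying the recursion $n_{i+1}=\textrm{U}(m_i,c^{\alpha_i})$.

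Second, and more seriously, your closing inequalities $\textrm{exp}_2(d_{p-1})<m_{k_*-1}\le l$ do not follow from the construction. After Hindman outputs $\{v_0,\dots,v_p\}_<$, write $v_p=w_{e_1,0}\cup\dots\cup w_{e_r,0}$; the only extension for which the iterated use of the refined colouring proves $\chi$-monochromaticity is by parallel copies $v_{p+s}=w_{e_1,s}\cup\dots\cup w_{e_r,s}$, and the number of such copies available is $l^*=m_{e_1}$, the \emph{smallest} $m$ among the indices $v_p$ hits. There is no reason for $e_1$ to be $k_*-1$, so $l=p+m_{e_1}$ can be far below $m_{k_*-1}$. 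What actually closes the argument is a coupling you have not identified: since $v_{p-1}<v_p$, the block $v_{p-1}$ lies entirely inside $\bigcup_{j<e_1}S_j$, whence $\textrm{exp}_2(v_{p-1})\le 2^{n_0+\dots+n_{e_1}}=m_{e_1}=l^*\le l$. The bound on $a_{p-1}$ and the length $l$ are governed by the \emph{same} index $e_1$ produced by Hindman; this is precisely why the ordered Hindman numbers (and $p+1$ rather than $p$) are invoked. Decoupling the two as you do --- bounding $a_{p-1}$ by $m_{k_*-1}$ and asserting $l\ge m_{k_*-1}$ --- leaves a genuine gap.
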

\begin{proof}
Let $\textbf{c}$ be a $c$-coloring of $\{1,\dots,2^{n_{k_{*}}}\}$. We will find $H=\{a_0,\dots,a_{l-1}\}\subseteq[2^{n_{k_{*}}}]$ satisfying the requirements of Definition \ref{spencer}. For $0\leq i\leq k_{*}-1$ we first define the following intervals of positive integers
\[
S_i=[n_0+\dots+n_i,n_0+\dots+n_{i+1}-1].
\]
So $|S_{i}|=n_{i+1}$ and $S_i<S_{i+1}$. Set $S^{*}=\bigcup_{i=0}^{k_{*}-1}S_i$. Let $\textbf{c}^{*}$ be a $c$-coloring of $\mathcal{P}^{+}(S^{*})$ defined by $\textbf{c}^{*}(A)=\textbf{c}(\textrm{exp}_2(A))$. For the next step, we shall find specific disjoint subsets $w_{i,s}\subseteq S_{i}$ for $0\leq i\leq k_{*}-1$, $0\leq s< m_i$ by reverse induction on $0\leq i\leq k_{*}-1$. Let $\textbf{c}_i$ be a coloring of $\mathcal{P}^{+}(S_i)$ defined as follows. For every $u,v\in\mathcal{P}^{+}(S_i)$, we put $\textbf{c}_i(u)=\textbf{c}_i(v)$ if for all $A\in\mathcal{P}(\bigcup_{j<i}S_j)$ and all $B\subseteq\{i+1,\dots,k_{*}-1\}$, we have
\begin{equation}\label{eq1}
\textbf{c}^{*}\big{(}A\cup u\cup\displaystyle\bigcup_{j\in B}w_{j,0}\big{)}=
\textbf{c}^{*}\big{(}A\cup v\cup\displaystyle\bigcup_{j\in B}w_{j,0}\big{)}.
\end{equation}
As $|\mathcal{P}(\bigcup_{j<i}S_j)|=2^{\sum_{j=1}^{i}n_j}$ and $|\mathcal{P}(\{i+1,\dots,k_{*}-1\})|=2^{k_{*}-i-1}$, we observe that the number of colors of $\textbf{c}_i$ is at most $c^{\alpha_{i}}$ where $\alpha_{i}=2^{k_{*}-i-1+\sum_{j=1}^{i}n_j}$. So from $n_{i+1}=\textrm{U}(m_{i},c^{\alpha_{i}})$ it follows that there are disjoint subsets $w_{i,s}\subseteq S_{i}$ for $0\leq s< m_i$ such that $NU\{w_{i,0},\dots,w_{i,m_{i}-1}\}$ is $\textbf{c}_i$-monochromatic. It is clear by construction that for $i_1<i_2$ we have $w_{i_1,j_1}<w_{i_2,j_2}$. Now consider
\[
NU\{w_{0,0},w_{1,0},\dots,w_{k_{*}-1,0}\}_{<}
\]
with the coloring $\textbf{c}^{*}$. Recall that $k_{*}=\textrm{Hind}(p+1,c)$, then there is $\{v_0,\dots,v_p\}_{<}$ such that
\begin{itemize}
\item[(i)] $v_{i}\in NU\{w_{0,0},w_{1,0},\dots,w_{k_{*}-1,0}\}_{<}$ for $0\leq i\leq p$,
\item[(ii)] $NU\{v_{0},\dots,v_{p}\}_{<}$ is $\textbf{c}^{*}$-monochromatic.
\end{itemize}
Assume that $v_{p}=w_{e_1,0}\cup\dots\cup w_{e_r,0}$ and $l^{*}=m_{e_1}$. Now set
\[
v_{p+1}=w_{e_1,1}\cup\dots\cup w_{e_r,1},
\]
\[
v_{p+2}=w_{e_1,2}\cup\dots\cup w_{e_r,2},
\]
\[
.\,\,\,\,\,\,\,.\,\,\,\,\,\,.\,\,\,\,\,\,.\,\,\,\,\,\,\,.\,\,\,\,\,\,.
\]
\[
\,\,\,\,\,\,\,\,v_{p+l^{*}-1}=w_{e_1,l^{*}-1}\cup\dots\cup w_{e_r,l^{*}-1}.
\]
Note that $v_{0},\dots,v_{p+l^{*}-1}$ are disjoint. We claim the desired $H=\{a_{0},\dots,a_{l-1}\}$ is obtained by putting $l=p+l^{*}$ and $a_{i}=\textrm{exp}_{2}(v_i)$. First observe that
\[
a_{0}=\textrm{exp}_{2}(v_0)\geq 2^{n_0}\geq m.
\]
Let $v_{p-1}=w_{d_1,0}\cup\dots\cup w_{d_q,0}$. Also $v_{p-1}<v_{p}$ implies $d_q<e_1$, so we have
\begin{eqnarray*}
              a_{p-1}=\textrm{exp}_{2}(v_{p-1})&=&\textrm{exp}_{2}(w_{d_1,0})+\dots+\textrm{exp}_{2}(w_{d_q,0})\\
                                             &\leq&\textrm{exp}_{2}(S_{d_1})+\dots+\textrm{exp}_{2}(S_{d_q})\\
                                               &\leq&2^{n_0}+2^{n_0+1}+\dots+2^{n_0+n_1+\dots+n_{d_q+1}-1}\\
                                               &\leq&2^{n_0+n_1+\dots+n_{d_q+1}}\,\,=\,\,m_{d_q+1}\,\,\leq\,\, m_{e_1}\,\,=\,\, l^{*}\,\,\leq\,\, l.
\end{eqnarray*}
Note that $a_0<a_1<\dots<a_{p-1}$, and also $a_{p-1}<a_{i}$ for $i\geq p$. This is enough for our purpose and there is no need to know the order of $\{a_p,a_{p+1},\dots,a_{l-1}\}$. It remains to show that $\sum H$ is $\textbf{c}$-monochromatic. This is equivalent to saying that $NU\{v_{0},\dots,v_{l-1}\}$ is $\textbf{c}^{*}$-monochromatic. Recall that $NU\{v_{0},\dots,v_{p}\}$ is $\textbf{c}^{*}$-monochromatic.  Let
\[
A_1\in NU\{v_{0},\dots,v_{p-1}\},\,\,\, B_1\in\{A_1,\emptyset\},\,\,\, A_2\in NU\{v_{p},\dots,v_{l-1}\}.
\]
Obviously $\textbf{c}^{*}(A_1)=\textbf{c}^{*}(v_p)$. So we will finish if we show $\textbf{c}^{*}(B_1\cup A_2)=\textbf{c}^{*}(v_p)$. This will be done by iterated application of the relation (\ref{eq1}) when $u,v\in NU\{w_{i,0},\dots,$ $w_{i,m_{i}-1}\}$. First note that we can write $A_2$ as
\[
\displaystyle\bigcup_{i\in I}w_{e_1,i}\cup\displaystyle\bigcup_{i\in I}w_{e_2,i}\cup\ldots\displaystyle\cup\bigcup_{i\in I}w_{e_r,i}
\]
for some $I\subseteq\{0,1,\dots,l^{*}-1\}$. Finally
\begin{eqnarray*}
 \textbf{c}^{*}(v_p)= \textbf{c}^{*}(B_1\cup v_p)&=&\textbf{c}^{*}\big{(}B_1\cup w_{e_1,0}\cup w_{e_2,0}\cup\dots\cup w_{e_r,0}\big{)}\\
                                             &=&\textbf{c}^{*}\big{(}B_1\cup\displaystyle\bigcup_{i\in I}w_{e_1,i}\cup w_{e_2,0}\cup\dots\cup w_{e_r,0}\big{)}\\
                                               &=&\textbf{c}^{*}\big{(}B_1\cup\displaystyle\bigcup_{i\in I}w_{e_1,i}\cup\displaystyle\bigcup_{i\in I}w_{e_2,i}\cup\dots\cup w_{e_r,0}\big{)}=\cdots\\
                                               &=&\textbf{c}^{*}\big{(}B_1\cup\displaystyle\bigcup_{i\in I}w_{e_1,i}\cup\displaystyle\bigcup_{i\in I}w_{e_2,i}\cup\ldots\displaystyle\cup\bigcup_{i\in I}w_{e_r,i}\big{)}\\
                                               &=&\textbf{c}^{*}(B_1\cup A_2).
 \end{eqnarray*}
\end{proof}
\bibliography{reference}
\bibliographystyle{plain}
\end{document}